\newtheorem{thm}{Theorem}[section]
\newtheorem{conj}[thm]{Conjecture}
\title[Conjectures on the Reduced Kronecker Coefficients]{Conjectures on the Reduced Kronecker Coefficients}
\author[Tao Gui]{Tao Gui\thanks{\href{mailto:guitao18@mails.ucas.ac.cn}{guitao18@mails.ucas.ac.cn}. }\addressmark{1}}
\address{\addressmark{1}Beijing International Center for Mathematical Research, Peking University, No.\ 5 Yiheyuan Road, Haidian District, Beijing 100871, P.R. China}
\abstract{We formulate a series of conjectures on the stable tensor product of irreducible representations of symmetric groups, which are closely related to the reduced Kronecker coefficients. These conjectures are certain generalizations of Okounkov's conjecture on the log-concavity of the Littlewood--Richardson coefficients and the Schur log-concavity theorem of Lam--Postnikov--Pylyavskyy. We prove our conjectures in some special cases and discuss some implications of these conjectures.}
\keywords{log-concavity, tensor products, representations of symmetric groups, reduced Kronecker coefficients, Schur polynomials}
\begin{document}

\maketitle
%% note that you DO NOT have to put your abstract here -- it is generated by \maketitle and the \abstract and \resume commands above

\section{Introduction}

The main purpose of this article is to announce and provide supporting evidence for some conjectures on the stable tensor product of irreducible representations of symmetric groups, which are closely related to the reduced Kronecker coefficients.

Recall that a sequence of real numbers $a_{0}, a_{1}, \ldots$ is called \emph{log-concave} if 
$$
a_{i}^{2} \geq a_{i-1} a_{i+1} \text { for all $i\geq 1$}.
$$
Log-concave sequences are very common in algebra, geometry, and combinatorics. In addition, many log-concave phenomena appear in representation theory. 

In an influential article \cite{okounkov2003would}, based on heuristics and analogies of physical principles, Okounkov made a remarkable conjecture (see \cref{Oconj}) that the Littlewood--Richardson coefficients $c_{\lambda\mu}^{\nu}$ are log-concave in $(\lambda, \mu, \nu)$. Although Okounkov's conjecture is false in general \cite{chindris2007counterexamples}, many consequences and interesting special cases are true. In particular, Okounkov's conjecture implies that tensor products of finite-dimensional irreducible polynomial representations of the general linear group are log-concave, which is proved in  \cite{lam2007schur} and called Schur log-concavity. 

Motivated by the Schur--Weyl duality, we would like to consider the corresponding conjectures for the symmetric groups, that is, by replacing the Littlewood--Richardson coefficients in Okounkov's conjecture with the Kronecker coefficients. It turns out the naive analogs for Kronecker coefficients are false, but it seems that certain log-concavity properties (see conjectures \ref{Gconj-1}, \ref{Gconj-2}, \ref{Gconj-3}, \ref{Gconj-4} and theorems \ref{thm-dim}, \ref{thm-tworow}, \ref{thm-hook}) reappears for the stable tensor product of irreducible representations of symmetric groups, whose structure constants are the reduced Kronecker coefficients.

The remaining part of this article is organized as follows. 
In \cref{Sec-Okounkov}, we recall Okounkov's conjecture on the log-concavity of the Littlewood--Richardson coefficients and some interesting implications and known special cases. In \cref{Sec-Kronecker}, we recall the Kronecker coefficients and discuss the convexity property of the Kronecker coefficients. In \cref{Sec-reduced}, we recall the reduced Kronecker coefficients. In \cref{Sec-Gconj}, we state our conjectures and evidence on the reduced Kronecker coefficients and some implications.

\section{Okounkov conjecture on the Littlewood--Richardson coefficients} \label{Sec-Okounkov}

Recall that the \emph{Littlewood--Richardson coefficients} $c_{\lambda \mu}^{\nu}$ are the structure constants of the tensor product of irreducible polynomial representations of general linear group $GL_{n}(\mathbb{C)}$:
$$
V(\lambda) \otimes V(\mu)=\bigoplus_\nu c_{\lambda \mu}^\nu V(\nu),
$$
where $\lambda, \mu$, and $\nu$ are partitions with lengths less than or equal to $n$. Okounkov made the following remarkable conjecture.

\begin{conj}(Disproved; Okounkov conjecture, see \cite[Conjecture 1]{okounkov2003would}) \label{Oconj}

The function $$
(\lambda, \mu, \nu) \rightarrow \log c_{\lambda \mu}^{\nu}
$$ 
is concave. That is, suppose $\left(\lambda_i, \mu_i, \nu_i\right), i=1,2,3$, are partitions such that
$$
\left(\lambda_2, \mu_2, \nu_2\right)=\frac{1}{2}\left(\lambda_1, \mu_1, \nu_1\right)+\frac{1}{2}\left(\lambda_3, \mu_3, \nu_3\right),
$$
then we have 
$$(c_{\lambda_2 \mu_2}^{\nu_2})^2 \geq c_{\lambda_1 \mu_1}^{\nu_1} c_{\lambda_3 \mu_3}^{\nu_3}.$$
\end{conj}

Okounkov's conjecture \ref{Oconj} is a very strong statement, which holds in the ``classical limit''
(see \cite[Section 3]{okounkov2003would}), but it is refuted in general in \cite{chindris2007counterexamples}. To describe the counterexamples, we use the multiplicity/exponential notation for a partition $(\lambda_1^{m_1}, \lambda_2^{m_2}, \lambda_3^{m_3} \cdots$, where $m_1$ is the number of $\lambda_1$ 's, $m_2$ is the number of $\lambda_2$ 's, etc. 

\begin{thm}[{\cite[Theorem 1.2]{chindris2007counterexamples}}] Let $n \geqslant 1$ be an integer and let $\lambda(n), \mu(n)$ be two partitions defined by
$$
\lambda(n)=\left(4^n, 3^{2 n}, 2^n\right) \quad \text { and } \quad \mu(n)=\left(3^n, 2^n, 1^n\right) .
$$
Then
$$
c_{\mu(n), \mu(n)}^{\lambda(n)}=\left(\begin{array}{c}
n+2 \\
2
\end{array}\right) \quad \text { and } \quad c_{2 \mu(n), 2 \mu(n)}^{2 \lambda(n)}=\left(\begin{array}{c}
n+5 \\
5
\end{array}\right) .
$$
Consequently, when $n \geqslant 21$, \cref{Oconj} fails for $\lambda_1=2\lambda(n)$, $\mu_1=\nu_1=2\mu(n)$, $\lambda_2=\lambda(n)$, $\mu_2=\nu_2=\mu(n)$, $\lambda_3=\mu_3=\nu_3=0$.
\end{thm}

However, several interesting implications and special cases of \cref{Oconj} are true.

First, as Okounkov observed in \cite[Section 2.6]{okounkov2003would}, concavity of $\log c_{\lambda \mu}^{\nu}$ implies that 
$$
\operatorname{supp} c_{\lambda \mu}^{\nu}=\left\{(\lambda, \mu, \nu), c_{\lambda \mu}^{\nu} \neq 0\right\}
$$
is convex. In particular, since it contains the origin $(0,0,0)$, it is saturated. This shows that \cref{Oconj} implies the \emph{saturation property} of Littlewood--Richardson coefficients\footnote{In fact, since $c_{0,0}^{0}=1$, \cref{Oconj} implies that
$c_{k \lambda, k \mu, k \nu} \leq \left(c_{\lambda, \mu, \nu}\right)^k.
$}:
\begin{equation} \label{satu}
c_{k \lambda, k \mu}^{k \nu} \neq 0 \text { for some } \mathrm{k} \geq 1 \Rightarrow c_{\lambda \mu}^{\nu} \neq 0,
\end{equation}
which was established before Okounkov's conjecture by A. Knutson and T. Tao in \cite{knutson1999honeycomb} using the honeycomb model of $GL_{n}(\mathbb{C})$ tensor products. 

Note that Knutson and Tao’s proof of the Saturation Conjecture implies that the decision problem “whether $c_{\mu \nu}^\lambda>0$” is in P; as a comparison, the famous Littlewood--Richardson rule,
%\footnote{`` Unfortunately the Littlewood--Richardson rule is much harder to prove than was at first suspected. The author was once told that the Littlewood--Richardson rule helped to get men on the moon but was not proved until after they got there.'' --- Gordon James (1987)}
which gives a positive combinatorial interpretation for the Littlewood--Richardson coefficients $c_{\lambda \mu}^{\nu}$, shows that “computing $c_{\lambda \mu}^{\nu}$” is in \#P (counting problems associated with the decision problems in the set NP); in fact, it is \#P-complete, see \cite{narayanan2006complexity}.

Another interesting implication of Okounkov's conjecture is also already observed by Okounkov in \cite[Section 2.5]{okounkov2003would}. \cref{Oconj} would have implied that for all $\nu$,
\begin{equation} \label{two}
c_{\frac{\lambda+\mu}{2} \frac{\lambda+\mu}{2}}^{\nu} \geq c_{\lambda \mu}^{\nu},
\end{equation}
provided $\frac{\lambda+\mu}{2}$ is an integral weight (a.k.a., a partition). It is equivalent to the inclusion of representations
\begin{equation} \label{tensor}
V(\lambda) \otimes V(\mu) \subset V\left(\frac{\lambda+\mu}{2}\right)^{\otimes 2},
\end{equation}
which can be interpreted as saying that the representation valued function 
\begin{equation}
V: \lambda \mapsto V(\lambda)
\end{equation}
is concave with respect to the natural ordering and tensor multiplication of representations. Since Schur polynomials are the characters of the corresponding irreducible polynomial representations of $\operatorname{GL}_{n}(\mathbb{C)}$, this remarkable implication is called \emph{Schur log-concavity} and has been established by T. Lam, A. Postnikov, and P. Pylyavskyy.

\begin{thm}[{\cite[Theorem 12]{lam2007schur}}, weak version]
    \label{Thm-Schur} 
For two partitions $\lambda$ and $\mu$, suppose $\lambda+\mu$ has only even parts and let $s_{\lambda}$, $s_{\mu}$, and $s_{\frac{\lambda+\mu}{2}}$ be the corresponding Schur polynomials, then
$s_{\frac{\lambda+\mu}{2}}^2 - s_{\lambda} s_{\mu}$ is a non-negative linear combination of Schur polynomials.
\end{thm}

Last but not least, a special but interesting case of \cref{Oconj} is recently obtained in \cite{huh2022logarithmic}. The \emph{Kostka number} $K_{\lambda \mu}$---the coefficient of monomials $x^\mu$ in the Schur polynomial $s_{\lambda}$, also known as the \emph{weight multiplicity} $\operatorname{dim} \mathrm{V}(\lambda)_\mu$ of the Schur module $\mathrm{V}(\lambda)$---is a special case of the Littlewood--Richardson coefficients: we have 
$$K_{\lambda \mu}=c_{\kappa \lambda}^\nu,$$
where $\nu$ and $\kappa$ are the partitions given by $\nu_i=\sum_{j=i}^n \mu_j$ and $\kappa_i=\sum_{j=i+1}^n \mu_j$. One of the main results in \cite{huh2022logarithmic} states that the Kostka number $K_{\lambda \mu}$ is log-concave along the \emph{root directions}:
let $e_i$ be the $i$-th standard unit vector in $\mathbb{N}^m$, for $\mu \in \mathbb{Z}^m$ and distinct $i, j \in[m]$, set
$$
\mu(i, j)=\mu+e_i-e_j,
$$
then the sequence of weight multiplicities of $\mathrm{V}(\lambda)$ we encounter is always log-concave if we walk in the weight diagram along any root direction $e_i-e_j$.

\begin{thm}[{\cite[Theorem 2]{huh2022logarithmic}}] \label{Kostka}
For any partition $\lambda$ and any $\mu \in \mathbb{N}^m$, we have
$$
K_{\lambda \mu}^2 \geqslant K_{\lambda \mu(i, j)} K_{\lambda \mu(j, i)} \text { for any } i, j \in[m] \text {. }
$$
\end{thm} 

\section{Kronecker Coefficients and Their Convexity Property} \label{Sec-Kronecker}

Recall that the Kronecker coefficients $g_{\lambda \mu}^{\nu}$ are the structure constants of the tensor product (Kronecker product) of irreducible representations of the symmetric group $S_{d}$:
$$
V_{\lambda} \otimes V_{\mu}=\bigoplus_\nu g_{\lambda \mu}^\nu V_{\nu},
$$
where $\lambda, \mu$ and $\nu$ are partitions of $d$. They were introduced by Murnaghan in 1938 and they play an important role algebraic combinatorics and geometric complexity theory. 

By the representation theory of finite groups, these coefficients can be computed as
$$
g_{\lambda \mu}^\nu=\frac{1}{n !} \sum_{\sigma \in S_d} \chi^\lambda(\sigma) \chi^\mu(\sigma) \chi^\nu(\sigma),
$$
where $\chi^\lambda(\sigma)$ is the character value of the irreducible representation corresponding to partition $\lambda$ on a permutation $\sigma \in S_d$.
Since irreducible representations of the symmetric group $S_{d}$ have integral character values, the Kronecker coefficient $g_{\lambda \mu}^\nu$ is invariant under permutations of the three partitions. This should be compared with Littlewood--Richardson coefficients $c_{\lambda \mu}^\nu$, where it is only invariant under transposition of $\lambda$ and $\mu$.

The Kronecker coefficients are very different beasts from their cousins Littlewood--Richardson coefficients. For example, computing Kronecker coefficients is \#P-hard and contained in GapP \cite{burgisser2008complexity}. A recent work \cite{ikenmeyer2017vanishing} shows that the decision problem ``whether $g_{\mu \nu}^\lambda>0$'' is NP-hard. They lack ``nice'' formulas and what we can hope is to understand their asymptotic behavior in various regimes and inequalities they could satisfy. Finding a combinatorial interpretation for them has been described by Richard Stanley
as ``one of the main problems in the combinatorial representation theory of the
symmetric group''. 
%It has been open for more than 80 years since 1938 when Murnaghan asked for such a combinatorial description \cite{murnaghan1938analysis}.

Let us now only focus on one particular aspect---the convexity property of the Kronecker coefficient. The verbatim translation of the saturation property \eqref{satu} that
holds for the Littlewood--Richardson coefficients is known to be false for the
Kronecker coefficients \cite{briand2009reduced}. The simplest counterexample might be
$g_{(1,1)(1,1)}^{(1,1)}=0$ but $g_{(2,2)(2,2)}^{(2,2)}=1$. Indeed,
$$
g_{(N, N),(N, N)}^{(N, N)}= \begin{cases}0 & \text { for odd } N, \\ 1 & \text { for even } N.\end{cases}
$$ 

Additionally, the verbatim translation of the property \eqref{two} or equivalently the property \eqref{tensor} that
holds for the Littlewood--Richardson coefficients is also false for the
Kronecker coefficients. One can locate a counterexample
\begin{equation} \label{counter}
\begin{aligned}
&V_{3,3,1,1}^{\otimes 2} - V_{4,4} \otimes V_{2,2,2,2} =V_{8} + 3V_{6, 2} + V_{7, 1} + V_{2, 2, 2, 2} + V_{2, 2, 2, 1, 1} + V_{2, 2, 1, 1, 1, 1}\\
&-V_{1, 1, 1, 1, 1, 1, 1, 1} 
+ 5V_{4, 2, 2} + 3V_{5, 1, 1, 1} + 5V_{5, 2, 1} + 6V_{4, 2, 1, 1} + 5V_{3, 2, 2, 1} + 3V_{4, 1, 1, 1, 1}\\
&+ 5V_{3, 2, 1, 1, 1} + 2V_{3, 1, 1, 1, 1, 1} + V_{6, 1, 1} + 2V_{5, 3} + V_{4, 4} + 5V_{4, 3, 1} + 2V_{3, 3, 2} + 4V_{3, 3, 1, 1}
\end{aligned}
\end{equation}
in the ring of virtual representations of $S_{8}$. The triple of partitions (6,4), (2,2,2,2,2) and (4,3,1,1,1) is a counterexample for $S_{10}$ and there are many more counterexamples for $S_{12}$.

Nevertheless, we conjecture that the verbatim translation of the property \eqref{two} or equivalently the property \eqref{tensor} that holds for the Littlewood--Richardson coefficients is also true for another closely related structure constant---the \emph{reduced Kronecker coefficients}.

\section{Reduced Kronecker Coefficients} \label{Sec-reduced}

For $\lambda$ a partition and $d \geq |\lambda|+\lambda_1$, the ``padded'' partition $\lambda[d]$ is defined as $(d-|\lambda|, \lambda)$, which is a partition of size $d$ with a ``very long top row''. 

It was noticed by Murnaghan in \cite{murnaghan1938analysis} that the sequence $\left\{g_{\lambda[d], \mu[d]}^{\nu[d]}\right\}_{d>>0}$ stabilizes and the stable
value of the sequence was called the \emph{reduced (or stable) Kronecker coefficient} $\bar{g}_{\lambda \mu}^\nu$ associated
with the triple $(\lambda, \mu, \nu)$. Given $\lambda$ and  $\mu$, only finitely many $\bar{g}_{\lambda \mu}^\nu$ are nonzero. Moreover, $\bar{g}_{\lambda \mu}^\nu=0$ unless the Murnaghan--Littlewood inequality holds:
$$
|\nu| \leq |\mu|+|\lambda|, |\mu| \leq |\lambda|+|\nu|, |\lambda| \leq |\mu|+|\nu|.
$$

In contrast to Kronecker coefficients, reduced Kronecker coefficients are defined for any triple of partitions (not necessarily of the same size) and in general, there is no relationship between $\lambda, \mu$, and $\nu$. However, surprisingly, when $|\nu|=|\lambda|+|\mu|$, the reduced Kronecker coefficient $\bar{g}_{\lambda \mu}^\nu$ recovers the Littlewood--Richardson coefficient $c_{\lambda \mu}^{\nu}$!

\begin{thm}[Murnaghan--Littlewood theorem, see \cite{littlewood1958products}] \label{ML}
If $|\nu|=|\lambda|+|\mu|$, then the reduced Kronecker coefficient $\bar{g}_{\lambda \mu}^\nu$ is equal to the Littlewood--Richardson coefficients $c_{\lambda \mu}^{\nu}$:
$\bar{g}_{\lambda \mu}^\nu=c_{\lambda \mu}^{\nu}.$
\end{thm}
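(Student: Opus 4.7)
The plan is to compute the stable Kronecker coefficient via a Young subgroup / branching calculation and isolate the Littlewood--Richardson contribution at the extremal size $|\nu|=|\lambda|+|\mu|$.

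Write $a=|\lambda|$, $b=|\mu|$, $c=|\nu|$, so that the hypothesis reads $c=a+b$. Choose $d$ in the stable range, large enough that $\lambda[d],\mu[d],\nu[d]$ are genuine partitions, and set $m=d-c$. Consider the Young subgroup $H=S_a\times S_b\times S_m\subset S_d$. Two successive applications of the Littlewood--Richardson rule (first to the branching $S_{a+b}\times S_m\subset S_d$, then to $S_a\times S_b\subset S_{a+b}$) give
$$V_{\nu[d]}\big|_H \;=\; \bigoplus_{\lambda'\vdash a,\, \mu'\vdash b,\, \rho\vdash m} \Bigl(\sum_{\pi\vdash c} c^\pi_{\lambda'\mu'}\, c^{\nu[d]}_{\pi\rho}\Bigr)\, V_{\lambda'}\boxtimes V_{\mu'}\boxtimes V_\rho.$$
Focus on the summand with $\rho=(m)$ (the trivial $S_m$-module) and $\lambda'=\lambda$, $\mu'=\mu$. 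Pieri's rule gives $c^{\nu[d]}_{\pi,(m)}=1$ iff $\nu[d]/\pi$ is a horizontal strip of size $m$, else $0$. Combined with $|\pi|=c$, the horizontal-strip inequalities $\pi_i\geq\nu[d]_{i+1}=\nu_i$ together with $\sum\pi_i=\sum\nu_i$ force $\pi=\nu$ uniquely. Thus $V_\lambda\boxtimes V_\mu\boxtimes V_{(m)}$ appears in $V_{\nu[d]}\big|_H$ with multiplicity exactly $c_{\lambda\mu}^\nu$.

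The main obstacle is upgrading this branching identification to the actual Kronecker multiplicity $g_{\lambda[d],\mu[d]}^{\nu[d]}$, since restriction to $H$ collapses information. The cleanest route is via the Frobenius characteristic: one writes $g_{\lambda[d],\mu[d]}^{\nu[d]}=\langle s_{\lambda[d]}*s_{\mu[d]},\, s_{\nu[d]}\rangle$ with $*$ denoting the internal (Kronecker) product on $\Lambda$, expands the long-top-row Schur functions via Jacobi--Trudi, and exploits the interaction of internal and external products on the symmetric function Hopf algebra to reduce the inner product to a finite sum over auxiliary triples of partitions weighted by products of LR coefficients. Under the extremal constraint $|\nu|=|\lambda|+|\mu|$, degree matching kills every such triple but one, leaving precisely $c_{\lambda\mu}^\nu$; alternatively, one can iterate Frobenius reciprocity on both sides of the inclusion $H\subset S_d$ and compare the multiplicity of $V_\lambda\boxtimes V_\mu\boxtimes V_{(m)}$ in $(V_{\lambda[d]}\otimes V_{\mu[d]})\big|_H$ to that computed above, using the stable range to suppress all non-extremal Pieri contributions.
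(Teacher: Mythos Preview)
The paper does not supply a proof of this theorem; it is quoted as a classical result with citations to Murnaghan and Littlewood. So there is no ``paper's own proof'' to compare against, and I can only assess your proposal on its own terms.

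Your first computation is correct: with $H=S_a\times S_b\times S_m$ and $m=d-c$, the multiplicity of $V_\lambda\boxtimes V_\mu\boxtimes V_{(m)}$ in $V_{\nu[d]}\big|_H$ is indeed $c_{\lambda\mu}^{\nu}$, and your Pieri argument forcing $\pi=\nu$ is clean. The trouble is that this is a branching multiplicity, not a Kronecker multiplicity, and you acknowledge as much.

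The real content of the theorem therefore lies entirely in your final paragraph, and that paragraph is not a proof but a list of strategies. In the symmetric-function route you assert that ``degree matching kills every such triple but one''; this is exactly the statement that needs justification, and you have not written down the sum in question, identified which triples survive, or explained why the Jacobi--Trudi correction terms for $s_{\lambda[d]}$ and $s_{\mu[d]}$ vanish in the stable range. In the Frobenius-reciprocity route you would need to show that the $H$-isotype $V_\lambda\boxtimes V_\mu\boxtimes V_{(m)}$ in $(V_{\lambda[d]}\otimes V_{\mu[d]})\big|_H$ is contributed \emph{only} by the summand $g_{\lambda[d],\mu[d]}^{\nu[d]}\,V_{\nu[d]}$ and by no other $V_\gamma$ with $g_{\lambda[d],\mu[d]}^{\gamma}>0$, and you would also need an independent computation of that isotype on the tensor-product side; neither step is carried out. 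As written, the proposal establishes the easy half of an equality and then gestures at the hard half.
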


Additionally, every Kronecker coefficient is equal to an explicit reduced Kronecker coefficient of not much larger partitions (see \cite[Theorem 1.1]{Ikenmeyer2023all}).

We would like to ask which convexity/concavity property could be satisfied by the reduced Kronecker coefficients.
Whether the verbatim translation of the saturation property \eqref{satu} that holds for the Littlewood--Richardson coefficients is also true for the reduced Kronecker coefficients is a long-standing open problem. It was independently conjectured in 2004 by Kirillov (who called them the \emph{extended Littlewood--Richardson coefficients}) and
Klyachko.

\begin{conj}[Disproved; Kirillov--Klyachko generalized saturation conjecture, see {\cite[Conjecture 2.33]{kirillov2004invitation}} and {\cite[Conjecture 6.2.4]{klyachko2004quantum}}] \label{kconj}
The reduced Kronecker coefficients satisfy the saturation property:
\begin{equation} \label{ksatu}
\bar{g}_{k \lambda, k \mu}^{k \nu} \neq 0 \text { for some } \mathrm{k} \geq 1 \Rightarrow \bar{g}_{\lambda \mu}^{\nu} \neq 0.
\end{equation}
\end{conj}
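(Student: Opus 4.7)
The plan is to combine the Murnaghan–Littlewood identification of reduced Kronecker with Littlewood–Richardson coefficients (Theorem~\ref{ML}) with the honeycomb/hive machinery of Knutson–Tao, guided by the hope that $\bar{g}_{\lambda\mu}^\nu$ admits a polytopal positivity criterion that degenerates, on the ``boundary stratum'' $|\nu|=|\lambda|+|\mu|$, to the usual hive model. Since Conjecture~\ref{kconj} is a longstanding open problem, any proposal is necessarily speculative; nonetheless, a polytopal approach is the most promising route because it is essentially the only known route to saturation-type statements.

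The first step is to locate a rational polytope $P_{\lambda,\mu,\nu}\subset\mathbb{R}^N$ (for some combinatorially defined $N$) with two properties: (a) $\bar{g}_{\lambda\mu}^\nu>0$ if and only if $P_{\lambda,\mu,\nu}\cap\mathbb{Z}^N\neq\emptyset$, and (b) dilation compatibility $P_{k\lambda,k\mu,k\nu}=k\cdot P_{\lambda,\mu,\nu}$. Candidates might be extracted by (i) taking an appropriate stable limit as $d\to\infty$ of the Knutson–Tao hive polytopes associated to the Kronecker coefficients $g_{\lambda[d],\mu[d]}^{\nu[d]}$ (Brion's monotonicity guarantees the stabilization on the combinatorial side), (ii) building on the quasi-polynomial descriptions of reduced Kronecker coefficients, or (iii) adapting Berenstein–Zelevinsky pattern calculus to the reduced regime. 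Any such $P$ must, on the locus $|\nu|=|\lambda|+|\mu|$, collapse to the hive polytope, which both constrains and helps guess its form.

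Assuming (a) and (b) are in place, the second step is to prove that $P_{\lambda,\mu,\nu}$ is a ``saturation polytope'', i.e.\ has integer vertices, so that nonemptiness of $k\cdot P$ implies nonemptiness of $P$. Here one would import the Knutson–Tao combinatorics—honeycomb perturbation and gentle-loop arguments—leveraging the fact that these are known to work on the boundary stratum and hoping that the additional facets introduced in the reduced Kronecker regime continue to intersect the lattice integrally. An orthogonal sanity check is to verify the implication on cases where a positive rule is known, e.g.\ hook shapes or the near-Littlewood–Richardson regime.

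The main obstacle is unambiguously Step~1: no polytopal model of $\bar{g}_{\lambda\mu}^\nu$ is presently known that meets both (a) and (b), and the \#P-hardness of computing reduced Kronecker coefficients indicates that any such model must be intricate. A potential fallback that avoids polytopes is a direct stabilization argument: if $\bar{g}_{k\lambda,k\mu}^{k\nu}\neq 0$ then $g_{k\lambda[d],k\mu[d]}^{k\nu[d]}\neq 0$ for $d\gg 0$, and one would attempt to descend to $g_{\lambda[d'],\mu[d']}^{\nu[d']}\neq 0$ for some $d'$. Because naive saturation fails for $g$, this descent must exploit the specific ``padded'' shape of the partitions in the stable regime, perhaps by explicitly constructing a nonzero invariant in $V_{\lambda[d']}\otimes V_{\mu[d']}\otimes V_{\nu[d']}$ from one in $V_{k\lambda[d]}\otimes V_{k\mu[d]}\otimes V_{k\nu[d]}$ via a tailored branching rule—precisely the step where one expects the argument to be hardest.
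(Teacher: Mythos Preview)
The statement you are attempting to prove is a \emph{conjecture}, and the paper does not offer a proof of it. On the contrary, immediately after stating Conjecture~\ref{kconj} the paper records that it has been \emph{refuted} by Pak--Panova (Theorem~\ref{break}): for every $k\ge 3$ the triple $\bigl(1^{k^2-1},\,1^{k^2-1},\,k^{k-1}\bigr)$ is a counterexample, and in fact infinitely many counterexamples of the form $(a^b,a^b,\gamma)$ exist whenever $\gamma_2\ge 3$. Thus there is no proof in the paper to compare your proposal against, and more to the point, no correct proof can exist.

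Your proposal therefore has a fatal gap at the most basic level: it is a strategy for establishing a false statement. The speculative polytopal model you outline cannot satisfy both properties (a) and (b) together with integrality of vertices, precisely because the conclusion you would draw from that combination is now known to fail. Likewise the ``fallback'' stabilization/descent argument must break down somewhere, and the Pak--Panova counterexamples even tell you where to look: padded partitions built from rectangles and columns. Rather than searching for a proof, the productive exercise is to trace through your outline with one of these explicit triples and pinpoint which hoped-for step is the one that actually fails.
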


However, this conjecture is recently refuted in general in \cite{pak2020breaking}:

\begin{thm}[{\cite[Theorem 2]{pak2020breaking}}] \label{break}
For all $k \geq 3$, the triple of partitions $\left(1^{k^2-1}, 1^{k^2-1}, k^{k-1}\right)$ is a counterexample to \cref{kconj}. Moreover, for every partition $\gamma$ s.t. $\gamma_2 \geq 3$, there are infinitely many pairs $(a, b) \in \mathbb{N}^2$ for which the triple of partitions $\left(a^b, a^b, \gamma\right)$ is a counterexample to \cref{kconj}.
\end{thm}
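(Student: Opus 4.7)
The plan is to reduce to a hook-by-hook Kronecker calculation and then exhibit an explicit sign cancellation. For $\lambda = \mu = 1^a$ with $a = k^2-1$, the padded partitions $\lambda[d] = \mu[d] = (d-a, 1^a)$ are hook shapes, so the Kronecker coefficients $g_{\lambda[d], \mu[d]}^{\nu[d]}$ are computed by the classical hook-times-hook rule (Remmel, with refinements by Rosas), which writes the coefficient as a signed count over border-strip tableaux of $\nu[d]$. Passing to the stable limit $d \to \infty$ yields a finite closed formula for $\bar{g}_{1^{a}, 1^{a}}^{\nu}$.

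For the first half of the theorem, I would specialize this formula to $a = k^2-1$ and $\nu = k^{k-1}$ (the $k \times (k-1)$ rectangle) and show that it collapses to zero via a sign-reversing involution on the indexing set of border-strip tableaux. The symmetry of the rectangle under $180^\circ$ rotation is the most natural candidate for producing such a pairing, and the arithmetic condition $a \equiv -1 \pmod k$ (equivalently $|\nu| \equiv 0 \pmod k$) is precisely what is needed for the involution to remain fixed-point-free.

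For the required non-vanishing of the scaled coefficient $\bar{g}_{k^{k^2-1}, k^{k^2-1}}^{(k^2)^{k-1}}$, the padded partitions $(k^{k^2-1})[d]$ are no longer hooks, so the hook-times-hook formula does not apply directly. Instead, I would produce an explicit positive contribution, either by constructing a highest-weight vector in the relevant tensor product via Schur--Weyl duality, or by invoking known positivity results for rectangular reduced Kronecker coefficients obtained from semigroup-type properties of stable triples.

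For the \emph{moreover} assertion, fix $\gamma$ with $\gamma_2 \geq 3$. Iterating the sign-cancellation argument and using the analogous stable hook-times-hook formula for $\bar{g}_{1^a, 1^a}^\gamma$, the condition $\gamma_2 \geq 3$ leaves enough structural room inside $\gamma$ for the involution to remain fixed-point-free for infinitely many $a$, lying in an arithmetic progression determined by $\gamma$. The main obstacle throughout is establishing the actual vanishing: proving that an alternating sum of border-strip tableau counts is zero is a fragile affair, and pinpointing the sign-reversing bijection requires genuine combinatorial input --- indeed, this is precisely the obstacle that made Conjecture \ref{kconj} plausible for so long.
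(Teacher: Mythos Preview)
This theorem is quoted verbatim from \cite{pak2020breaking}; the present paper offers no proof of its own, only the citation. There is therefore nothing in the paper to compare your sketch against.

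That said, your plan for the vanishing half is more elaborate than it needs to be. You correctly note that for $\lambda=\mu=1^{a}$ the padded partitions $(1^{a})[d]$ are hooks and that Rosas's hook-times-hook formula applies. You then propose to engineer a sign-reversing involution on border-strip tableaux, keyed to the congruence $a\equiv -1\pmod{k}$, to make an alternating sum collapse. None of this is required. By the very theorem of Rosas that this paper invokes in its own proof of Theorem~\ref{thm} (namely \cite[Thm.~3]{rosas2001kronecker}), a Kronecker coefficient of two hooks vanishes automatically unless the third partition is contained in a \emph{double hook}, meaning every part from the third onward is at most~$2$. For $\nu=k^{k-1}$ with $k\ge 3$ the padded partition $\nu[d]=(d-k(k-1),k,\ldots,k)$ has third part equal to $k\ge 3$, so it fails the double-hook test and the coefficient is zero outright. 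The ``moreover'' clause is disposed of by the identical observation: the hypothesis $\gamma_{2}\ge 3$ says precisely that $\gamma[d]$ has third part at least~$3$ and hence is not a double hook. No involution and no arithmetic condition on $a$ is needed for either part.

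The substantive content of the theorem is the \emph{non}-vanishing of the dilated coefficient, e.g.\ $\bar g_{k^{k^{2}-1},\,k^{k^{2}-1}}^{(k^{2})^{k-1}}\neq 0$, and here your proposal is a genuine gap. ``Construct a highest-weight vector via Schur--Weyl duality'' and ``invoke semigroup-type positivity for rectangular reduced Kronecker coefficients'' are placeholders, not arguments: the first gives no indication of what vector to write down, and the second is close to assuming the conclusion. This is the step where the actual work in \cite{pak2020breaking} lies, and your sketch does not yet supply a mechanism for it.
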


\section{Log-concavity Conjectures of Stable Tensor Product of Irreducible Representations of Symmetric Groups} \label{Sec-Gconj}

One main contribution of this article is the following conjecture.
\begin{conj} \label{Gconj-1}
The reduced Kronecker coefficients satisfy the following inequality: given $\lambda$ and $\mu$, then for all $\nu$, we have
\begin{equation} 
\bar{g}_{\frac{\lambda+\mu}{2} \frac{\lambda+\mu}{2}}^{\nu} \geq \bar{g}_{\lambda \mu}^{\nu},
\end{equation}
provided $\frac{\lambda+\mu}{2}$ is still a partition.
\end{conj}

We tested the above statement for all partitions $\lambda$ and $\mu$ with at most $11$ boxes.

We want an equivalent version of the above conjecture akin to \eqref{tensor}, which can be formulated by using the stable representation category of the symmetric group in \cite{sam2015stability}. 

Consider the natural embedding $S_d \hookrightarrow S_{d+1}$ by permuting the first $d$ natural numbers. Let $S_{\infty}:=\bigcup_{d \geqslant 0} S_d$ be the limit, which is the group of permutations of $\mathbb{N}$ that fix all but finitely many numbers. The group
$S_{\infty}$ has a natural action on $V=\mathbb{C}^{\infty}$ by permuting the basis vectors $\left\{e_i\right\}_{i \in \mathbb{N}}$. Sam and Snowden considered the category $\operatorname{Rep}(S_{\infty})$ of \emph{algebraic representations} of $S_{\infty}$, where a representation of
$S_{\infty}$ is called \emph{algebraic} if it appears as a subquotient of a direct sum of some tensor product of $V$. They proved the following:
\begin{itemize}
    \item $\operatorname{Rep}(S_{\infty})$ is an abelian $\mathbb{C}$-linear symmetric monoidal category but is not semisimple;
    \item Simple objects $V_{\lambda[\infty]}$ in $\operatorname{Rep}(S_{\infty})$ are one-to-one correspondent to the partition $\lambda$ of arbitrary size;
    \item Every object in $\operatorname{Rep}(S_{\infty})$ has finite length;
    \item The structure constants of the Grothendieck ring $
\operatorname{K}(\operatorname{Rep}(S_{\infty}))$
are the reduced Kronecker coefficients: 
$$
\bar{g}_{\lambda, \mu}^{\nu}=\left[V_{\lambda[\infty]} \otimes V_{\mu[\infty]}: V_{\nu[\infty]}\right].
$$
\end{itemize}
Therefore, the category $\operatorname{Rep}(S_{\infty})$ seems to be a natural categorical home of reduced Kronecker coefficients. Let us say that two objects $X$ and $Y$ in $\operatorname{Rep}(S_{\infty})$ satisfy $X \geq Y$ in the Grothendieck ring $\operatorname{K}(\operatorname{Rep}(S_{\infty}))$ if $[X]-[Y]$ in $\operatorname{K}(\operatorname{Rep}(S_{\infty}))$ can be expanded in $V_{\lambda[\infty]}$'s with nonnegative coefficients. Then, our conjecture \ref{Gconj-1} is equivalent to the following

\begin{conj}[Restatement of \cref{Gconj-1}] \label{Gconj-2}
The representation valued function 
\begin{equation}
\begin{aligned}
V: \quad &\mathcal{P} \rightarrow \operatorname{Rep}\left(S_{\infty}\right)\\
&\lambda \longmapsto V_{\lambda[\infty]}
\end{aligned}
\end{equation}
is concave with respect to the natural ordering and tensor products of representations. That is, 
\begin{equation} \label{Tensor}
 V_{\frac{\lambda+\mu}{2}[\infty]}^{\otimes 2} \geq V_{\lambda[\infty]} \otimes V_{\mu[\infty]}\text{ in the Grothendieck ring $\operatorname{K}(\operatorname{Rep}(S_{\infty}))$, }
\end{equation}
provided $\frac{\lambda+\mu}{2}$ is still a partition.
\end{conj}

In this form, the log-concavity of \eqref{tensor} can be seen as a degeneration and a special case of \eqref{Tensor} by virtue of the Murnaghan--Littlewood theorem (\cref{ML}); see also \cite[Section 8.7]{sam2015stability}. Additionally, the conjecture predicts that if we pass to infinity, the mysterious minus sign in \eqref{counter} disappears\footnote{Note that there is no ``sign'' representation in $\operatorname{Rep}(S_{\infty})$.}, and we obtain log-concavity in the limit, which fits well with the conjectures and results in \cites{matherneequivariant,gui2022on}. 

Using \cref{Thm-Schur}, we have the following log-concavity property of the dimensions of representations in \eqref{Tensor}. It greatly generalizes \cite[Theorem 1.1 (1)]{zbMATH07081660}.

\begin{thm} \label{thm-dim}
We have
\begin{equation} \label{dimlog}
    \left(\operatorname{dim}V_{\frac{\lambda+\mu}{2}[d]}\right)^{2} \geq \operatorname{dim}V_{\lambda[d]} \times \operatorname{dim}V_{\mu[d]}
\end{equation}
for $d \geq \operatorname{max}\{|\lambda|+\lambda_1, |\mu|+\mu_1\}$. In another form, we have
\begin{equation} \label{younglog}
\left(f^{\frac{\lambda+\mu}{2}[d]}\right)^{2} \geq f^{\lambda[d]} \times f^{\mu[d]},
\end{equation}
where $f^{\lambda}$ denotes the number of standard Young tableaux of shape $\lambda$.
\end{thm}

\begin{proof}
First, note that $\frac{\lambda[d]+\mu[d]}{2}=\frac{\lambda+\mu}{2}[d]$ under the condition $d \geq \operatorname{max}\{|\lambda|+\lambda_1, |\mu|+\mu_1\}$. Let $s_\lambda$ denote the Schur function of shape $\lambda$. By \cref{Thm-Schur}, we have 
\begin{equation} \label{schurlog}
s_{\frac{\lambda+\mu}{2}[d]}^{2} - s_{\lambda[d]} \times s_{\mu[d]} \geq_s 0,
\end{equation}
which means the left-hand side is a nonnegative linear
combination of Schur functions. Let $\Lambda=\oplus_{n \geq 0} \Lambda_{\mathbb{Q}}^n$ be the algebra of symmetric functions. Then, we have the exponential specialization $\operatorname{ex}_1$, which is an algebra homomorphism $\operatorname{ex}_1: \Lambda \rightarrow \mathbb{Q}$, and
$$
\operatorname{ex}_1\left(s_{\lambda}\right)=\frac{f^{\lambda}}{|\lambda|!}
,$$ 
see, for example, \cite{zbMATH01268810}.
Applying the exponential specialization $\operatorname{ex}_1$ to \eqref{schurlog}), we obtain \eqref{younglog}, which is well-known equivalent to the inequality \eqref{dimlog}. The proof is completed.
\end{proof}

Using the existing combinatorial interpretation of Kronecker coefficients with two two-row partitions, we can prove the following

\begin{thm} \label{thm-tworow}
\cref{Gconj-1}, or equivalently, \cref{Gconj-2}, holds when partitions $\lambda$ and $\mu$ are both one part. Actually, we have the following stronger inequalities:  for all partition $\nu$,
\begin{equation} \label{tworow}
\bar{g}_{(j) (k)}^{\nu} \geq \bar{g}_{(i) (l)}^{\nu},
\end{equation}
whenever $i < j \le k < l$ with $j+k=i+l$.
\end{thm} 

\begin{proof}
First, we have $\bar{g}_{(j), (k)}^{\nu}=g_{(j)[n], (k)[n]}^{\nu[n]}$ and $\bar{g}_{(i), (l)}^{\nu}=g_{(i)[n], (l)[n]}^{\nu[n]}$ for $n$ sufficiently large. It is well known that $g_{\lambda \mu}^{\nu}=0$ when $\lambda$ and $\mu$ are both two-row partitions but $\nu$ has more than 4 parts. Therefore, we assume $\nu[n]=(\nu_1,\nu_2,\nu_3)[n]$. By \cite[Theorem 1]{rosas2001kronecker}, we have 
\begin{equation} \label{comb}
\begin{aligned}
        \bar{g}_{(j)[n],(k)[n]}^{\nu[n]}=&\Gamma\left(\nu_2+\nu_3, \nu_1-\nu_2, \nu_1+\nu_3+1, \nu_2-\nu_3\right)(j, k+1)\\
    &-\Gamma\left(\nu_2+\nu_3, \nu_1-\nu_2, n-\nu_1-\nu_2+2, \nu_2-\nu_3\right)(j, k+1),\\       \bar{g}_{(i)[n],(l)[n]}^{\nu[n]}=&\Gamma\left(\nu_2+\nu_3, \nu_1-\nu_2, \nu_1+\nu_3+1, \nu_2-\nu_3\right)(i, l+1)\\
    &-\Gamma\left(\nu_2+\nu_3, \nu_1-\nu_2, n-\nu_1-\nu_2+2, \nu_2-\nu_3\right)(i, l+1),
    \end{aligned}
\end{equation}
where $\Gamma(a, b, c, d)(x, y):=\left|\left\{(u, v) \in R \cap \mathbf{N}^2:(x, y) \leadsto(u, v)\right\}\right|.$
Here, $R$ is the rectangle with vertices $(a, c),(a+b, c),(a, c+d)$, and $(a+b, c+d)$, and $(x, y) \leadsto(u, v)$ means $(u, v)$ can be reached from $(x, y)$ by moving any number of steps south west or north west. Let $n$ be large enough (that is, we pull up the rectangle $R$ to live very high) such that both minus terms in \eqref{comb} are 0. Then, it is clear from the definition that 
\begin{align*} 
& \Gamma\left(\nu_2+\nu_3,\nu_1-\nu_2,\nu_1+\nu_3+1,\nu_2-\nu_3\right)(j,k+1) \\ & \quad \geq \Gamma\left(\nu_2+\nu_3,\nu_1-\nu_2,\nu_1+\nu_3+1,\nu_2-\nu_3\right)(i,l+1),
\end{align*}
since $(i, l+1)$ is in the northwest of $(j, k+1)$. Therefore, we have
$$\bar{g}_{(j) (k)}^{\nu}=g_{(j)[n], (k)[n]}^{\nu[n]} \geq g_{(i)[n], (l)[n]}^{\nu[n]}=\bar{g}_{(i) (l)}^{\nu}.$$
\end{proof}

Let us now discuss a conjecture related to \cref{Gconj-2}. For two partitions $\lambda$ and $\mu$, let $\lambda \cup \mu=\left(\nu_1, \nu_2, \nu_3, \ldots\right)$ be
the partition obtained by rearranging all parts of $\lambda$ and $\mu$ in the weakly decreasing order. Let $\operatorname{sort}_1(\lambda, \mu):=\left(\nu_1, \nu_3, \nu_5, \ldots\right)$ and $\operatorname{sort}_2(\lambda, \mu):=\left(\nu_2, \nu_4, \nu_6, \ldots\right)$. Then, we have the following conjecture, which generalizes the conjecture of Fomin, Fulton, Li, and Poon in \cite[Conjecture 2.7]{fomin2005eigenvalues}.

\begin{conj} \label{Gconj-3}
For two partitions $\lambda$ and $\mu$, we have 
$$
 V_{\text {sort}_1(\lambda, \mu)[\infty]} \otimes V_{\mathrm{sort}_2(\lambda, \mu)[\infty]} \geq V_{\lambda[\infty]} \otimes V_{\mu[\infty]} \text{ in the Grothendieck ring $\operatorname{K}(\operatorname{Rep}(S_{\infty}))$}.$$
\end{conj}

As observed in \cite{lam2007schur}, \cref{Gconj-3} is related to \cref{Gconj-2} by conjugating the shapes. However, since we have to add a ``very long top row'', \cref{Gconj-3} can not be directly deduced from (even a stronger version of) \cref{Gconj-2}, unlike the case in \cite{lam2007schur}. Indeed, we can not even prove the following inequality for dimensions of representations 
\begin{equation} 
f^{\text {sort}_1(\lambda, \mu)[d]} \times f^{\text {sort}_2(\lambda, \mu)[d]} \geq f^{\lambda[d]} \times f^{\mu[d]} \text{ for } d \geq \operatorname{max}\{|\lambda|+\lambda_1, |\mu|+\mu_1\}
\end{equation}
by directly using results in \cite{lam2007schur}. Nevertheless, using the existing combinatorial interpretation of Kronecker coefficients with two hook-shape partitions, we have the following

\begin{thm} \label{thm-hook}
\cref{Gconj-3} holds when partitions $\lambda$ and $\mu$ are both one column. 
Actually, we have the following stronger inequalities
\begin{equation} \label{hook}
\bar{g}_{(1^j) (1^k)}^{\nu} \geq \bar{g}_{(1^i) (1^l)}^{\nu}, \quad \text{ for all partition } \nu,
\end{equation} 
whenever $i < j \le k < l$ with $j+k=i+l$.
\end{thm}

\begin{proof}
First, we have $\bar{g}_{(1^j) (1^k)}^{\nu}=g_{(1^j)[n], (1^k)[n]}^{\nu[n]}$ and $\bar{g}_{(1^i) (1^l)}^{\nu}=g_{(1^i)[n], (1^l)[n]}^{\nu[n]}$ for $n$ sufficiently large. By \cite[Theorem 3]{rosas2001kronecker}, the only possible values for these Kronecker
coefficients are $0$, $1$ or $2$. We use the following notation
$$
((P))= \begin{cases}1, & \text { if proposition } P \text { is true, } \\ 0, & \text { otherwise. }\end{cases}
$$

We have the following 4 cases:\\
1. If $\nu[n]$ is one-row (i.e.\ $\nu=\emptyset$), then $g_{(1^j)[n], (1^k)[n]}^{\nu[n]}=\delta_{j, k}$, $g_{(1^i)[n], (1^l)[n]}^{\nu[n]}=\delta_{i, l}$.\\
2. If $\nu[n]$ is not contained in a double hook, then $g_{(1^j)[n], (1^k)[n]}^{\nu[n]}=g_{(1^i)[n], (1^l)[n]}^{\nu[n]}=0$.\\
3. Let $\nu[n]=\left(1^{d_1} 2^{d_2} n_3 n_4\right)$ be a double hook. Let $x=2 d_2+d_1$. Then,
$$
\begin{aligned}
g_{(1^j)[n], (1^k)[n]}^{\nu[n]}=&\left(\left(n_3-1 \leq \frac{j+k-x}{2} \leq n_4\right)\right)\left(\left(|k-j| \leq d_1\right)\right) \\
&+\left(\left(n_3 \leq \frac{j+k-x+1}{2} \leq n_4\right)\right)\left(\left(|k-j| \leq d_1+1\right)\right)\\
g_{(1^i)[n], (1^l)[n]}^{\nu[n]}=&\left(\left(n_3-1 \leq \frac{i+l-x}{2} \leq n_4\right)\right)\left(\left(|l-i| \leq d_1\right)\right) \\
&+\left(\left(n_3 \leq \frac{i+l-x+1}{2} \leq n_4\right)\right)\left(\left(|l-i| \leq d_1+1\right)\right).
\end{aligned}
$$
Note that if $n_4=0$, then we shall rewrite $\nu[n]=\left(1^{d_1} 2^{d_2-1} 2 n_3\right)$.\\
4. Let $\nu[n]=\left(1^d w\right)$ be a hook shape. Let $n$ be sufficiently large. Then,
$$
\begin{aligned}
g_{(1^j)[n], (1^k)[n]}^{\nu[n]}&=((j \leq d+k))((d \leq j+k))((k \leq j+d)),\\
g_{(1^i)[n], (1^l)[n]}^{\nu[n]}&=((i \leq d+l))((d \leq i+l))((l \leq i+d)).
\end{aligned}
$$

It is not hard to see that \eqref{hook} holds in any case, which easily implies \cref{Gconj-3} when partitions $\lambda$ and $\mu$ are both one column. The proof is completed.
\end{proof}

We note that inequality \eqref{tworow} and inequality \eqref{hook} show a beautiful symmetry that is not transparent if we do not remove the ``very long top rows''.

\cref{Gconj-3} is useful. For example, it implies that the intersection cohomology of the symmetric reciprocal plane $X_n$ (see in \cite[Theorem 1.2]{proudfoot2016intersection}) is equivariant log-concave at degree $i$ as a graded representation of the symmetric group $S_n$ for $n$ large enough.

We note that Conjectures \ref{Gconj-2} and \ref{Gconj-3} can also be formalized as \emph{Schur positivity conjectures} using the \emph{Schur functions} and the \emph{Kronecker (internal) product} between them, provided that the related partitions all have “long enough top rows”. These conjectures could also be formalized using the Deligne category or partition algebra, which might shed some light on these conjectures. However, due to the lack of general knowledge of the (reduced) Kronecker coefficients, proving these conjectures, in general, seems to be beyond the reach of existing technology.

\textcolor{red}{As informed by Mike Zabrocki, Conjectures \ref{Gconj-1} is false. The first counterexamples that he found are partitions $\lambda$ and $\mu$ with $12$ boxes: all of
${\overline g}_{4422, 4422}^{\lambda}$
for $\lambda$ in $\{(1^{16}), (1^{15}), (2, 1^{14}), (3, 1^{14}), (2, 2, 1^{13})\}$
are equal to $0$, while
${\overline g}_{5511, 3333}^{\lambda}$
are all greater than $0$ (in fact they are $1,2,3,1,1$, respectively). Those are the only examples where the conjecture fails for two partitions of $12$. The fact that Conjecture \ref{Gconj-1} first fails for $|\lambda|=|\mu|=12$ is somewhat surprising. It is interesting to see whether one can revise or refine the conjecture.}

\acknowledgements{The author is very grateful to Mike Zabrocki for finding and telling him the counterexamples. The author would like to thank Brendon Rhoades, Peng Shan, and Arthur L. B. Yang for helpful discussions and is especially grateful to Matthew H.Y. Xie for the help with the computer computations of tensor product multiplicities. The author also thanks the anonymous referees for the comments and suggestions.}

%% if you use biblatex then this generates the bibliography
%% if you use some other method then remove this and do it your own way
\printbibliography

\end{document}